\newtheorem{theorem}{Theorem}[section]
\newtheorem{proposition}[theorem]{Proposition}
\newtheorem{lemma}[theorem]{Lemma}
\newtheorem{definition}[theorem]{Definition}
\newtheorem{varexample}[theorem]{Example}
\newtheorem{corollary}[theorem]{Corollary}
\newenvironment{example}{\begin{varexample}
\begin{normalfont}}{\end{normalfont}
\end{varexample}}
\newcommand{\rk}{\mathrm{rk}}
\title{Refined Brill-Noether Theory for Complete Graphs}
\author{Haruku Aono}
\author{Eric Burkholder}
\author{Owen Craig}
\author{Ketsile Dikobe}
\author{David Jensen}
\author{Ella Norris}
\begin{document} 
\bibliographystyle{alpha}

\begin{abstract}
The divisor theory of the complete graph $K_n$ is in many ways similar to that of a plane curve of degree $n$.  We compute the splitting types of all divisors on the complete graph $K_n$.  We see that the possible splitting types of divisors on $K_n$ exactly match the possible splitting types of line bundles on a smooth plane curve of degree $n$.  This generalizes the earlier result of Cori and Le Borgne computing the ranks of all divisors on $K_n$, and the earlier work of Cools and Panizzut analyzing the possible ranks of divisors of fixed degree on $K_n$.
\end{abstract}

\maketitle

\section{Introduction}
\label{Sec:Intro}

Brill-Noether theory is the study of line bundles on algebraic curves.  Two important invariants of a line bundle are its degree and its rank, and it is common to study the space of line bundles with fixed degree and rank on a given curve $C$.  In a famous series of results from the 1980's, it was shown that, if $C$ is sufficiently general, then these spaces of line bundles are smooth \cite{Gieseker82} of the expected dimension \cite{GriffithsHarris80}, and irreducible when this dimension is positive \cite{FultonLazarsfeld81} .  When $C$ is not special, however, the situation is more mysterious.

The Brill-Noether theory of plane curves has been studied since at least the late 19th century, when Max Noether considered the possible ranks of line bundles of fixed degree on a plane curve $C \subset \mathbb{P}^2$ of degree $n$ \cite{Noether82}.  This problem was solved simultaneously by Hartshorne \cite{Hartshorne86} and Ciliberto \cite{Ciliberto84} a century later.  More recently, Larson and Vemulapalli studied a more refined invariant of line bundles on plane curves, known as the \emph{splitting type} \cite{LarsonVemulapalli}.  Given a curve $C$, a line bundle $\mathcal{L}$ on $C$, and a map $\pi \colon C \to \mathbb{P}^1$ of degree $n$, the pushforward $\pi_* \mathcal{L}$ is a vector bundle of rank $n$ on $\mathbb{P}^1$.  Since every vector bundle on $\mathbb{P}^1$ is isomorphic to a direct sum of line bundles, there exists a sequence of integers $(e_1 , \ldots , e_n)$, unique up to permutation, such that $\pi_* \mathcal{L} \cong \oplus_{i=1}^n \mathcal{O}_{\mathbb{P}^1} (e_i)$.  The sequence $(e_1 , \ldots , e_n)$, called the \emph{splitting type} of the line bundle $\mathcal{L}$, has attracted a great deal of recent interest \cite{Larson21, CookPowellJensen22, CPJ22, LLV25}.  In the case of a plane curve, the map $\pi \colon C \to \mathbb{P}^1$ is given by projection from a point in $\mathbb{P}^2$ not on $C$, and the splitting type is independent of the choice of point.  In \cite{LarsonVemulapalli}, Larson and Vemulapalli show that, for a general plane curve $C$, the locus of line bundles with fixed splitting type is smooth, and compute its dimension.

A standard way to approach problems in Brill-Noether theory is via degeneration.  One degenerates to a singular curve and analyzes the limiting behavior of line bundles.    In \cite{Baker08}, Baker develops a theory of divisors on graphs that is useful in such degeneration arguments.  Briefly, a \emph{divisor} on a graph is a formal $\mathbb{Z}$-linear combination of vertices of $G$.  Baker defines an equivalence relation on divisors and a notion of rank.  He shows that if $\mathfrak{C}$ is a regular semistable model over a discrete valuation ring, with general fiber $C$ and whose special fiber has dual graph $G$, then there is a specialization map from the Picard group of $C$ to that of $G$, and the rank cannot increase under specialization.

When studying plane curves of degree $n$, there is a natural choice of singular curve -- the union of $n$ general lines in $\mathbb{P}^2$.  The dual graph of this singular curve is the complete graph $K_n$, which is the graph with $n$ vertices labeled $v_1 , \ldots , v_n$, and where every pair of vertices is adjacent.   In the special case where $\mathfrak{C}$ is a flat family of plane curves whose special fiber is a union of $n$ general lines, the class of a line specializes to the divisor $L = v_1 + v_2 + \cdots + v_n$ on the complete graph $K_n$.

In \cite{CoriLeBorgne}, Cori and Le Borgne provide a formula for the rank of any divisor on the complete graph $K_n$.  In \cite{CoolsPanizzut}, Cools and Panizzut use this to compute the possible ranks of divisors of fixed degree on $K_n$, providing a new proof of the Ciliberto-Hartshorne result for general plane curves.  The main result of this note is a formula for the splitting type of a divisor on $K_n$.  Because the splitting type of a divisor contains strictly more information than the rank and degree, this result generalizes the earlier work of both Cori-Le Borgne and Cools-Panizzut.  In order to state our result, we first need some terminology.

\begin{definition}
A divisor $D = \sum_{i=1}^n a_i v_i$ on the complete graph $K_n$ is \emph{concentrated} if, for every $i \leq n$, we have $\# \{ j \mid a_j - \min \{ a_k \} \leq i-1 \} \geq i$.
\end{definition}

As noted in \cite{CoriLeBorgne}, concentrated divisors with fixed minimum coefficient $\min \{ a_k \}$ are in bijection with parking functions.  In their paper, Cori and Le Borgne refer to a divisor as \emph{parking} if $\min \{ a_k \mid k \leq n-1 \} = 0$ and, for every $i \leq n-1$, we have $\# \{ j \leq n-1 \mid a_j \leq i-1 \} \geq i$.  Note that there is no condition on the coefficient $a_n$.  We use the new term ``concentrated'' to distinguish these divisors from the parking divisors of Cori and Le Borgne, emphasizing that the conditions hold for all coefficients, including $a_n$.  Our first result is that every divisor on $K_n$ can be put into a standard form.

\begin{proposition}
\label{Prop:Concentrated}
Every divisor on the complete graph $K_n$ is equivalent to a concentrated divisor.
\end{proposition}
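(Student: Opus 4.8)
The plan is to reduce $D$ to a normal form and then adjust a single coefficient by chip-firing. First I would invoke the standard fact that every divisor class on a graph has a unique $q$-reduced representative for each choice of base vertex $q$, and apply it with $q = v_n$. On the complete graph the reducedness condition is especially transparent: for a subset $S \subseteq \{v_1,\dots,v_{n-1}\}$ of size $k$, every vertex of $S$ has exactly $n-k$ edges leaving $S$, so the characterization of reduced divisors (Dhar's burning algorithm) says that $D = \sum_i a_i v_i$ is $v_n$-reduced precisely when $a_i \ge 0$ for $i \le n-1$ and, after sorting $a_1 \le \dots \le a_{n-1}$, one has $a_j \le j-1$ for all $j \le n-1$; equivalently, $(a_1,\dots,a_{n-1})$ is a parking function of length $n-1$, with $a_n \in \mathbb{Z}$ otherwise unconstrained. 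This is exactly the ``parking divisor'' normal form of Cori and Le Borgne, so I would either cite their work or include this short derivation.

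Next I would observe that a $v_n$-reduced divisor is already concentrated as soon as $0 \le a_n \le n-1$. Indeed, when $a_n \ge 0$ the minimum coefficient is $0$, the parking inequalities on $a_1,\dots,a_{n-1}$ give $\#\{j \mid a_j \le i-1\} \ge i$ for every $i \le n-1$ for free, and the only remaining case $i = n$ asks that every coefficient be at most $n-1$, which holds exactly when $a_n \le n-1$ since the first $n-1$ coefficients are automatically at most $n-2$.

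It then remains to move $a_n$ into such a length-$n$ window. Firing the vertex $v_n$ once replaces $(a_1,\dots,a_{n-1},a_n)$ by $(a_1+1,\dots,a_{n-1}+1,\,a_n-(n-1))$, and firing the complementary set $\{v_1,\dots,v_{n-1}\}$ once is the inverse move; so for any $t \in \mathbb{Z}$ the original divisor is equivalent to the one with coefficients $a_i + t$ for $i \le n-1$ and $a_n - t(n-1)$. Choosing $t = \lfloor a_n/n \rfloor$, so that $tn \le a_n \le tn + (n-1)$, the last coefficient becomes $a_n - t(n-1) \in [t,\,t+n-1]$, while the first $n-1$ coefficients are a uniform translate by $t$ of the original parking function, hence lie in $[t,\,t+n-2]$ and still satisfy $\#\{i \le n-1 \mid a_i + t \le t+i-1\} \ge i$ for $i \le n-1$. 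Thus the minimum coefficient of the new divisor is $t$, the conditions for $i \le n-1$ hold by translation-invariance of the parking inequalities, and for $i = n$ every coefficient lies in the window $[t,\,t+n-1]$; so the new divisor is concentrated, as desired.

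I do not anticipate a serious obstacle here: the argument is essentially a normal-form computation. The only points requiring care are stating the characterization of $v_n$-reduced divisors on $K_n$ correctly, and the bookkeeping in the last step — checking that the parking inequalities survive the uniform shift by $t$ and that the choice $t = \lfloor a_n/n \rfloor$ places $a_n - t(n-1)$ in exactly the interval $[t,\,t+n-1]$.
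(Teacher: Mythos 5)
Your proposal is correct and takes essentially the same approach as the paper: reduce to the $v_n$-reduced (parking) normal form, then shift by $t = \lfloor a_n/n \rfloor$ iterations of the firing move at $v_n$, which is exactly the paper's step of subtracting $mnv_n$ and adding $mL$ with $m = t$. The bookkeeping (translation-invariance of the parking inequalities and the window $[t, t+n-1]$ for the last coefficient) all checks out.
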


\noindent Indeed, in Section~\ref{Sec:Reps}, we provide an algorithm for computing a concentrated divisor equivalent to a given divisor on $K_n$.

Cori and Le Borgne define a divisor $D = \sum_{i=1}^n a_i v_i$ on the complete graph $K_n$ ito be \emph{sorted} if $a_1 \leq a_2 \leq \cdots \leq a_{n-1}$.  Note again that there is no condition on $a_n$.  We say that $D$ is \emph{super sorted} if $a_1 \leq a_2 \leq \cdots \leq a_n$.  Given a divisor $D$ on $K_n$, we can always choose a permutation of the vertices so that $D$ is sorted or super sorted.  This simplifies many of our arguments.  Note that a super sorted divisor $D = \sum_{i=1}^n a_i v_i$ on the complete graph $K_n$ is concentrated if and only if $0 \leq a_i - a_1 \leq i-1$ for all $i$.

For divisors in this standard form, there is a simple formula for the splitting type.

\begin{theorem}
\label{Thm:MainThm}
Let $D = \sum_{i=1}^n a_i v_i$ be a super sorted, concentrated divisor on the complete graph $K_n$, and let $e_i = a_i - i + 1$.  Then $D$ has splitting type $(e_1, \ldots , e_n)$.
\end{theorem}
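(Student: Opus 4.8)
The plan is to reduce the computation of the splitting type to a computation of ranks of twists by $L = v_1 + \cdots + v_n$, and then to establish the resulting rank formula. Recall that $(e_1,\dots,e_n)$ is the splitting type of $D$ precisely when
$$\rk(D + dL) \;=\; -1 + \sum_{i=1}^n \max\{0,\ e_i + d + 1\}$$
for every $d \in \mathbb Z$ (the graph analogue of computing $h^0$ of the twists $\pi_*\mathcal L(d)$), and this family of identities determines $(e_1,\dots,e_n)$ up to reordering. For any $d$, the divisor $D + dL = \sum_i (a_i + d) v_i$ is again super sorted and concentrated, with associated integers $(a_i + d) - i + 1 = e_i + d$; so it suffices to prove, for \emph{every} super sorted concentrated divisor $D$ on $K_n$, the single rank formula
$$\rk(D) \;=\; -1 + \sum_{i=1}^n \max\{0,\ a_i - i + 2\}.$$
Write $f(D)$ for the right-hand side. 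Theorem~\ref{Thm:MainThm} follows by applying this to each twist $D + dL$.

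For the upper bound $\rk(D) \le f(D)$, first note that if $a_1 = \min_k a_k < 0$ then, since $D$ is super sorted and concentrated, $a_i - i + 2 \le (a_1 + i - 1) - i + 2 = a_1 + 1 \le 0$ for all $i$, so $f(D) = -1$ and we must show $D$ is not equivalent to an effective divisor. If instead $a_1 \ge 0$, set $E = \sum_i \max\{0,\ a_i - i + 2\}\, v_i$, an effective divisor of degree $f(D) + 1$; then $D - E$ has $i$-th coefficient $\min\{a_i,\ i - 2\}$, which one checks directly is again super sorted and concentrated, now with minimum coefficient $\min\{a_1, -1\} = -1$. In both cases the desired bound reduces to the following lemma: \emph{a concentrated divisor on $K_n$ with negative minimum coefficient is not equivalent to an effective divisor.}

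This lemma is the crux. I would prove it via Dhar's burning algorithm: after relabelling so the divisor is super sorted with $a_1 < 0$, one performs the minimal reduction making it nonnegative away from $v_1$ and checks that concentratedness is exactly the condition ensuring that, in the burning process started at $v_1$, the fire spreads to every other vertex — so the resulting divisor is $v_1$-reduced — while its coefficient at $v_1$ remains negative (equivalently, the degree is too small to dominate a $v_1$-superstable configuration, these being the parking functions on the remaining $n-1$ vertices). Alternatively, granting from Section~\ref{Sec:Reps} that every equivalence class contains a unique super sorted concentrated divisor and that the reduction algorithm there preserves effectivity, the lemma is immediate: a concentrated $D$ equivalent to an effective $E$ would coincide with the effective concentrated form of $E$. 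Either way, this is where the combinatorics specific to $K_n$ enters, and I expect it to be the main obstacle.

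For the lower bound $\rk(D) \ge f(D)$ I would induct on $\deg D$. The cases $a_1 < 0$ (where $f(D) = -1$) and $f(D) = 0$ are handled directly. When $a_1 \ge 0$ and $f(D) \ge 1$ the divisor $D$ is effective, so $\rk(D) = 1 + \min_j \rk(D - v_j)$; reducing each $D - v_j$ to a super sorted concentrated divisor $D^{(j)}$ via the algorithm of Section~\ref{Sec:Reps} and invoking the inductive hypothesis gives $\rk(D - v_j) = f(D^{(j)})$, and one finishes with the purely combinatorial inequality $f(D^{(j)}) \ge f(D) - 1$, valid for every $j$, whose verification amounts to tracking how the reduction algorithm changes coefficients. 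Combined with the upper bound this yields $\rk(D) = f(D)$ for every super sorted concentrated divisor, and hence, applied to the twists $D+dL$, Theorem~\ref{Thm:MainThm}.
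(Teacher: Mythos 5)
Your opening reduction is the same as the paper's: since $D+dL$ is again super sorted and concentrated with associated integers $e_i+d$, it suffices to prove the single identity $\rk(D)=-1+\sum_i\max\{0,a_i-i+2\}$ for every super sorted concentrated $D$. From there you diverge: the paper simply evaluates the Cori--Le Borgne rank formula (Theorem~\ref{Thm:Rank}) on the $v_n$-reduced divisors $D+knv_n$ and checks the four cases, whereas you propose an independent two-sided argument. Your upper bound is essentially complete and quite clean: subtracting $E=\sum_i\max\{0,a_i-i+2\}v_i$ does yield the coefficients $\min\{a_i,i-2\}$, which are again super sorted and concentrated with minimum $-1$, so everything rests on your key lemma. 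That lemma is true and provable with the paper's tools: if $D$ is super sorted concentrated with $a_1=m<0$, write $D=D'+mL$ with $D'$ concentrated and $\min$ coefficient $0$; then $D\sim D'+mnv_n$, which is $v_n$-reduced by Lemma~\ref{Lem:Reduced} (the parking condition only involves $a_1,\dots,a_{n-1}$) and has $a'_n+mn\le (n-1)-n<0$ at $v_n$. But one of your two suggested proofs of the lemma is unusable: there is \emph{no} unique super sorted concentrated representative in each class --- the paper's own example $(0,\dots,0,n-1)\sim$ sorted $(0,1,\dots,1)$ gives two distinct ones --- so the ``uniqueness'' shortcut is based on a false premise. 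Only the Dhar-type argument survives, and it should be run toward $v_n$ as above rather than $v_1$ (the divisor need not be effective away from $v_1$ when several coefficients are negative).

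The genuine gap is in the lower bound. The entire inductive step hinges on the inequality $f(D^{(j)})\ge f(D)-1$, where $D^{(j)}$ is a concentrated representative of $D-v_j$ produced by the algorithm of Section~\ref{Sec:Reps}. You describe this as a purely combinatorial verification of ``how the reduction algorithm changes coefficients,'' but this is precisely where all the difficulty of the theorem lives: $f$ is \emph{not} a priori an invariant of the equivalence class (that invariance is essentially the content of the theorem), and Dhar's algorithm applied to $D-v_j$ can involve a long sequence of set-firings, each of which rearranges the multiset $\{a_i-i\}$ in a way you have not analyzed. Deducing the inequality from $\rk(D-v_j)\ge\rk(D)-1$ would be circular. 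Until this inequality is actually proved, the lower bound --- and hence the theorem --- is not established. A direct verification would likely amount to a case analysis comparable in length to the paper's computation with Theorem~\ref{Thm:Rank}, so nothing is gained for free; the trade is that your route would avoid quoting the Cori--Le Borgne formula, at the price of proving this firing-by-firing monotonicity statement from scratch.
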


As a consequence, we see that the possible splitting types of divisors on $K_n$ exactly match the possible splitting types of line bundles on a smooth plane curve of degree $n$.

\begin{corollary}
\label{Cor:PossibleSplitting}
There exists a divisor of splitting type $(e_1 , \ldots , e_n)$ on $K_n$ if and only if, when arranged in decreasing order $e_1 \geq e_2 \geq \cdots \geq e_n$, we have $e_i \leq e_{i+1} + 1$ for all $i$.
\end{corollary}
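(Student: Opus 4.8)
The plan is to derive the corollary directly from Theorem~\ref{Thm:MainThm} and Proposition~\ref{Prop:Concentrated}, so that it reduces to an elementary fact about reordering integer sequences.

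For necessity, suppose some divisor has splitting type $(e_1,\dots,e_n)$. Since the splitting type depends only on the linear equivalence class and is unaffected by relabeling the vertices of $K_n$ (which is vertex-transitive), Proposition~\ref{Prop:Concentrated} lets me assume the divisor is $D=\sum a_i v_i$ with $D$ super sorted and concentrated, so $a_1\le\cdots\le a_n$ and $0\le a_i-a_1\le i-1$ for all $i$. By Theorem~\ref{Thm:MainThm}, $\{e_1,\dots,e_n\}=\{\,a_i-i+1 : 1\le i\le n\,\}$ as multisets. I then introduce $c_i:=(i-1)-(a_i-a_1)$, so that $a_i-i+1=a_1-c_i$; concentratedness gives $c_i\ge 0$ with $c_1=0$, and since $a$ is non-decreasing, $c_{i+1}-c_i=1-(a_{i+1}-a_i)\le 1$. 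The key point is that such a sequence — non-negative integers, starting at $0$, rising by at most $1$ per step — realizes an initial segment of $\mathbb{Z}_{\ge 0}$: setting $M_j=\max(c_1,\dots,c_j)$ one has $M_1=0$, $M_n=\max_i c_i$, $M_{j+1}\in\{M_j,M_j+1\}$, and each $M_j$ equals some $c_i$, so every integer from $0$ to $\max_i c_i$ occurs among the $c_i$. Hence $\{a_i-i+1\}=\{a_1-c_i\}$ is a contiguous interval of integers, so when the $e_i$ are listed in decreasing order their consecutive differences lie in $\{0,1\}$; in particular $e_i\le e_{i+1}+1$.

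For sufficiency, given integers $e_1\ge\cdots\ge e_n$ with $e_i\le e_{i+1}+1$, I would write down the explicit divisor $D:=\sum_{i=1}^n(e_i+i-1)v_i$. Setting $a_i=e_i+i-1$, one has $a_{i+1}-a_i=1-(e_i-e_{i+1})\in\{0,1\}$, so $D$ is super sorted, and $a_i-a_1=(i-1)-(e_1-e_i)$ with $0\le e_1-e_i=\sum_{j<i}(e_j-e_{j+1})\le i-1$, so $D$ is concentrated. Theorem~\ref{Thm:MainThm} then yields that $D$ has splitting type $(a_i-i+1)_{i=1}^n=(e_1,\dots,e_n)$.

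The only step I expect to require any thought is the combinatorial observation in the necessity half — that a non-negative integer sequence starting at $0$ and growing by at most one at each step hits every integer between $0$ and its maximum — and even that is a one-line induction; everything else is substitution into the formula of Theorem~\ref{Thm:MainThm}. I would close by remarking that the balance condition $e_i\le e_{i+1}+1$ found here coincides with the known characterization of splitting types of line bundles on a smooth plane curve of degree $n$, which is what justifies the comparison stated just before the corollary.
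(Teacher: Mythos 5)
Your proposal is correct and follows essentially the same route as the paper: reduce to a super sorted concentrated representative via Proposition~\ref{Prop:Concentrated}, read off $e_i = a_i - i + 1$ from Theorem~\ref{Thm:MainThm}, observe that the resulting values form a gap-free interval (your running-maximum argument on $c_i = e_1 - e_i$ is just a rephrasing of the paper's observation that the sequence starts at its maximum and each strict decrease is by exactly $1$), and for the converse write down $a_i = e_i + i - 1$ and verify it is super sorted and concentrated. No substantive differences.
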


\section*{Acknowledgements} This research was conducted as a project with the University of Kentucky Math Lab, supported by NSF DMS-2054135.

\section{Preliminaries}
\label{Sec:Prelim}

\subsection{Divisors on Graphs}
\label{Sec:Divisors}
Recall from the introduction that a \emph{divisor} on a graph $G$ is a formal $\mathbb{Z}$-linear combination of vertices of $G$.  We think of a divisor $D = \sum_{i=1}^n a_i v_i$ as a configuration of poker chips on the vertices of the graph, where the vertex $v_i$ has $a_i$ chips.  The \emph{degree} of a divisor $D = \sum_{i=1}^n a_i v_i$ is the integer $\mathrm{deg} (D) = \sum_{i=1}^n a_i$.  In other words, the degree of $D$ is the total number of chips on the graph.  Given a divisor $D$ and a vertex $v$, we may \emph{fire} $v$ to obtain a new divisor $D'$.  The chip firing move sends one chip from $v$ to each of its neighbors.  Thus, the divisor $D'$ has one more chip on each vertex adjacent to $v$, and $\mathrm{deg}(v)$ fewer chips at $v$.  In the particular case of the complete graph $K_n$, the divisor $D'$ has one more chip on every vertex other than $v$, and $n-1$ fewer chips at $v$.  We say that two divisors on a graph $G$ are \emph{equivalent} if one can be obtained from the other by a sequence of chip-firing moves.  Note that the divisor $L$ on $K_n$ from the introduction is equivalent to the divisor $nv_j$ for all $j$.

Given a subset $A$ of the vertices of $G$, if we fire each of the vertices in $A$ in any order, this results in sending one chip along each edge from $A$ to its complement.  We refer to this as firing the set $A$.  On the complete graph $K_n$, if the set $A$ has size $j$, then when fire $A$, each vertex in $A$ loses $n-j$ chips, and each vertex not in $A$ gains $j$ chips.

\subsection{Reduced Divisors}
\label{Sec:Reduced}
A divisor $D = \sum_{i=1}^n a_i v_i$ on a graph $G$ is \emph{effective} if $a_i \geq 0$ for all $i$.  We say that $D$ is \emph{effective away from} a vertex $v_j$ if $a_i \geq 0$ for all $i \neq j$.  The divisor $D$ is $v_j$-\emph{reduced} if $D$ is effective away from $v_j$, and firing any subset $A$ not containing $v_j$ results in a divisor that is not effective away from $v_j$.  On the complete graph $K_n$, there is a simple characterization of $v_n$-reduced divisors.

\begin{lemma} \cite[Lemma~5]{CoolsPanizzut}
\label{Lem:Reduced}
A divisor $D = \sum_{i=1}^n a_i v_i$ on $K_n$ is $v_n$-reduced if and only if it is a parking divisor in the sense of \cite{CoriLeBorgne}.
\end{lemma}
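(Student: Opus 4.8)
The plan is to argue directly from the definition of $v_n$-reducedness given in Section~\ref{Sec:Reduced}, specialized to the combinatorics of $K_n$ recorded there. Recall that firing a subset $A \subseteq \{v_1, \ldots, v_{n-1}\}$ of size $j$ subtracts $n-j$ from each coefficient $a_w$ with $v_w \in A$ and adds $j$ to every other coefficient. In particular, the vertices outside $A$ only gain chips, and they are already nonnegative once $D$ is effective away from $v_n$; so the divisor obtained by firing $A$ fails to be effective away from $v_n$ precisely when some $v_w \in A$ has $a_w - (n-j) < 0$. The first step is therefore to record the reformulation: $D$ is $v_n$-reduced if and only if $a_i \geq 0$ for all $i \leq n-1$ and every nonempty $A \subseteq \{v_1, \ldots, v_{n-1}\}$ contains a vertex $v_w$ with $a_w \leq n - |A| - 1$. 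This is the only place the edge structure of $K_n$ is used.

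Next I would unpack the parking condition of \cite{CoriLeBorgne} in a parallel form. Since $\min\{a_k \mid k \leq n-1\} = 0$ already forces $a_i \geq 0$ for $i \leq n-1$, the substantive content is the inequality $\#\{j \leq n-1 \mid a_j \leq i-1\} \geq i$ for all $i \leq n-1$, which by complementation inside $\{1, \ldots, n-1\}$ is equivalent to $\#\{k \leq n-1 \mid a_k \geq i\} \leq n-1-i$ for all such $i$. The correspondence between the two statements of the lemma will come from matching a subset $A$ of size $j$ with the threshold $i = n-j$, under which both $j$ and $i$ range over $\{1, \ldots, n-1\}$.

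I would then check both implications against this reformulation. For $v_n$-reduced $\Rightarrow$ parking: effectivity away from $v_n$ is immediate, applying the reformulation to $A = \{v_1, \ldots, v_{n-1}\}$ produces a coefficient $\leq 0$ and hence $\min\{a_k \mid k \leq n-1\} = 0$; and if the counting inequality failed at some $i \leq n-1$, there would be at least $n-i$ indices $j \leq n-1$ with $a_j \geq i$, any $n-i$ of which span a set $A$ with $|A| = n-i$ all of whose coefficients are $\geq i = n - |A|$, contradicting the reformulation. For the converse: $\min\{a_k \mid k \leq n-1\} = 0$ gives effectivity away from $v_n$, and for a nonempty $A \subseteq \{v_1, \ldots, v_{n-1}\}$ of size $j$, if every coefficient in $A$ were $\geq n-j$ then there would be at least $j$ indices $k \leq n-1$ with $a_k \geq n-j$, forcing $\#\{k \leq n-1 \mid a_k \leq (n-j)-1\} \leq n-1-j = (n-j)-1$ and violating the counting inequality at $i = n-j$; so some $v_w \in A$ has $a_w \leq n - |A| - 1$, and $D$ is $v_n$-reduced.

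There is no serious obstacle here: the argument is a pigeonhole translation between ``every subset $A$ contains a small coefficient'' and ``there are enough small coefficients overall.'' The only thing needing care is the index bookkeeping — keeping track that subset sizes $j$ and thresholds $i$ both run over $\{1, \ldots, n-1\}$ under $i = n-j$, and that the failure of effectivity after firing $A$ is always witnessed inside $A$ because the outside vertices only gain chips. One could instead derive the reformulation of $v_n$-reducedness from Dhar's burning algorithm, but on $K_n$ the direct computation above is shorter and entirely self-contained.
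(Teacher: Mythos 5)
Your proof is correct. The paper itself offers no argument for this lemma --- it is quoted directly from Cools--Panizzut \cite[Lemma~5]{CoolsPanizzut} --- so there is no in-paper proof to compare against; your write-up supplies the missing verification, and it is the natural one. The key reformulation (firing a nonempty $A \subseteq \{v_1,\ldots,v_{n-1}\}$ of size $j$ preserves effectivity away from $v_n$ iff every vertex of $A$ carries at least $n-j$ chips, since vertices outside $A$ only gain) is exactly the right way to exploit the symmetry of $K_n$, and the complementation/pigeonhole translation between ``every size-$j$ subset contains a coefficient $\leq n-j-1$'' and ``$\#\{k \leq n-1 \mid a_k \leq i-1\} \geq i$ for $i = n-j$'' is carried out with the indices handled correctly in both directions, including the extraction of $\min\{a_k \mid k \leq n-1\} = 0$ from the case $A = \{v_1,\ldots,v_{n-1}\}$. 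This matches in spirit the argument in the cited source, which likewise proceeds by testing all subsets not containing $v_n$ rather than invoking Dhar's algorithm.
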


For any choice of vertex $v$, every divisor on $G$ is equivalent to a unique $v$-reduced divisor.  Given a divisor $D$ that is effective away from $v$, there is an algorithm for computing the $v$-reduced divisor equivalent to $D$, known as \emph{Dhar's Burining Algorithm}.  For the vertex $v_n$ on the complete graph $K_n$, this algorithm is simple to describe.  First, find the maximum value of $i$ such that $\# \{ j \leq n-1 \mid a_i \leq i-1 \} \leq i-1$.  If no such $i$ exists, then by Lemma~\ref{Lem:Reduced}, the divisor $D$ is $v_n$-reduced and we are done.  Otherwise, fire the set $A = \{ v_j \mid j \leq n-1 \text{ and } a_j \geq i \}$.  Each vertex in $A$ loses $n - \vert A \vert$ chips and each vertex not in $a$ gains $\vert A \vert$ chips.  Since each vertex in $A$ has at least $i$ chips and $n - \vert A \vert \leq i$, the resulting divisor $D'$ is still effective away from $v_n$.  We then replace $D$ with $D'$ and iterate this procedue until the resulting divisor is $v_n$-reduced.  Note that, each time we fire the set $A$, the total number of chips on vertices other than $v_n$ decreases.  Thus, this procedure terminates after finitely many steps.

\subsection{Ranks and Splitting Types}
\label{Sec:Rank}

In \cite{BakerNorine07}, Baker and Norine define the rank of a divisor on a graph.

\begin{definition}
Let $D$ be a divisor on a graph $G$.  If $D$ is not equivalent to an effective divisor, we say that it has \emph{rank} $-1$.  Otherwise, we define the \emph{rank} of $D$ to be the maximum integer $r$ such that, for all effective divisors $E$ of degree $r$, $D-E$ is equivalent to an effective divisor.
\end{definition}

The \emph{canonical divisor} of a graph $G$ is the divisor $K_G = \sum_{i=i}^n (\mathrm{val}(v_i) - 2)v_i$.  On the complete graph $K_n$, the canonical divisor $K$ is equal to $(n-3)L$.  In \cite{BakerNorine07}, Baker and Norine prove an analogue of the Riemann-Roch Theorem for graphs.

\begin{theorem} \cite[Theorem~1.12]{BakerNorine07}
Let $G$ be a graph with first Betti number $g$, and let $D$ be a divisor on $G$.  Then
\[
\rk(D) - \rk(K_G - D) = \deg(D) - g + 1.
\]
\end{theorem}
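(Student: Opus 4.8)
The plan is to reprove this as the graph-theoretic Riemann--Roch theorem of Baker and Norine, by their combinatorial method, built around reduced divisors and a distinguished family of divisors of degree $g-1$. The first step is to repackage the rank. For a divisor $D'$ write $\epsilon(D') = 1$ if $D'$ is equivalent to an effective divisor and $\epsilon(D') = 0$ otherwise. Using that every class on a graph has a unique $q$-reduced representative (computable by Dhar's burning algorithm), one sees that $\epsilon(D') = 1$ exactly when the coefficient of $q$ in that representative is nonnegative, and more generally one verifies
\[
\rk(D') + 1 = \min \{ \deg(E) \mid E \geq 0,\ \epsilon(D' - E) = 0 \},
\]
valid for every divisor $D'$ (the set on the right is never empty, since $D' - E$ fails to be equivalent to an effective divisor once $\deg(E) > \deg(D')$). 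Thus it suffices to understand $\epsilon$.

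Second, I would introduce the relevant divisors. For an acyclic orientation $O$ of $G$ put $\nu_O = \sum_v (\mathrm{indeg}_O(v) - 1)\, v$. Since $G$ has first Betti number $g$ it has $|E| = g + |V| - 1$ edges, so $\deg(\nu_O) = |E| - |V| = g - 1$; and since $\mathrm{indeg}_O(v) + \mathrm{indeg}_{\bar O}(v) = \mathrm{val}(v)$ for the reverse orientation $\bar O$, we get $\nu_O + \nu_{\bar O} = K_G$ (in particular $\deg K_G = 2g-2$). If $O$ has a unique source $q$, then every other vertex has positive indegree, so $\nu_O$ is effective away from $q$ with coefficient $-1$ at $q$; a short check against Dhar's criterion — using that the restriction of $O$ to any vertex subset is acyclic and hence has a source — shows $\nu_O$ is $q$-reduced, and therefore is not equivalent to an effective divisor.

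Third, and this is the heart of the matter, I would prove the dichotomy: for every divisor $D$ and every acyclic orientation $O$ with a unique source, exactly one of $\epsilon(D) = 1$ and $\epsilon(\nu_O - D) = 1$ holds. The ``not both'' direction is immediate, since $D \sim E \geq 0$ and $\nu_O - D \sim E' \geq 0$ would force $\nu_O \sim E + E' \geq 0$, contradicting the previous step. The ``at least one'' direction is the main obstacle: assuming $\epsilon(D) = 0$, one passes to the $q$-reduced representative $c$ of $D$ (with $q$ the source of $O$), observes that $c(q) \leq -1$ and that $\nu_O - c$ is already nonnegative at $q$ and at the sink of $O$ (here one uses that $q$-reducedness forces $c$ to be small on each vertex), and must then show that $\nu_O - c$, after firing a sequence of vertex sets dictated by the orientation $O$, becomes globally effective. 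I expect essentially all the difficulty of the theorem to live in this step — the precise compatibility between $q$-reduced divisors and acyclic orientations.

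Finally, I would deduce the stated identity by unwinding the rank formula with the dichotomy in hand. Fix any total order on $V$ and let $O$ be the induced acyclic orientation, so that $O$ and $\bar O$ each have a unique source and $\nu_{\bar O} = K_G - \nu_O$. For a divisor $D$ and an effective $E$, the dichotomy for $\nu_O$ gives that $\epsilon(D - E) = 0$ iff $\epsilon(\nu_O - D + E) = 1$ iff there is an effective $F$ with $E \sim F + D - \nu_O$; moreover, for a fixed effective $F$ such an effective $E$ exists iff $\epsilon(F + D - \nu_O) = 1$, which by the dichotomy for $\nu_{\bar O}$ is equivalent to $\epsilon((K_G - D) - F) = 0$. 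Since equivalent divisors have equal degree, $\deg(E) = \deg(F) + \deg(D) - (g-1)$ whenever $E \sim F + D - \nu_O$, so minimizing $\deg(E)$ is the same as minimizing $\deg(F)$, and the rank formula yields
\[
\rk(D) + 1 = \big(\deg(D) - g + 1\big) + \min \{ \deg(F) \mid F \geq 0,\ \epsilon((K_G - D) - F) = 0 \}.
\]
The minimum on the right is $\rk(K_G - D) + 1$ by the same rank formula applied to $K_G - D$, so $\rk(D) - \rk(K_G - D) = \deg(D) - g + 1$, as asserted.
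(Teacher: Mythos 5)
The paper does not prove this statement; it is quoted from Baker--Norine, so there is no in-paper argument to compare against. Your proposal reconstructs the Baker--Norine strategy, and Steps 1 and 2 are correct (the reformulation of $\rk$, the identities $\deg(\nu_O)=g-1$ and $\nu_O+\nu_{\bar O}=K_G$, and the fact that $\nu_O$ is $q$-reduced with coefficient $-1$ at the source, hence not equivalent to an effective divisor). The problem is Step 3: the dichotomy you state --- that for \emph{every} acyclic orientation $O$ with a unique source, exactly one of $\epsilon(D)=1$ and $\epsilon(\nu_O-D)=1$ holds --- is false with that universal quantifier. Take $G=K_3$ with $g=1$ and the orientation $O$ induced by the order $v_3<v_1<v_2$, so $\nu_O=v_2-v_3$, and let $D=v_1-v_3$. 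The degree-zero Picard group of $K_3$ is $\mathbb{Z}/3$, and both $D$ and $\nu_O-D=v_2-v_1$ represent nonzero classes, so neither is equivalent to an effective divisor: the ``at least one'' half fails. (The orientation induced by $v_3<v_2<v_1$ does work for this $D$, which illustrates the correct statement: Baker--Norine's Theorem 3.3 is existential --- exactly one of ``$\epsilon(D)=1$'' and ``$\epsilon(\nu_O-D)=1$ for \emph{some} acyclic orientation $O$ with unique source'' holds, and the orientation must be constructed from $D$.)

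This is not a cosmetic issue, because your Step 4 leans on the universal form in an essential way: you convert $\epsilon(D-E)=0$ into $\epsilon(\nu_O-D+E)=1$ for one \emph{fixed} $O$, and then again apply the dichotomy for the single orientation $\bar O$. With the correct existential dichotomy, $\epsilon(D-E)=0$ only tells you that $\nu-D+E$ is equivalent to an effective divisor for \emph{some} $\nu$ in the set $\mathcal{N}$ of all such $\nu_O$, and the chain of equivalences you wrote breaks; recovering Riemann--Roch then requires the additional bookkeeping in Baker--Norine's argument (minimizing $\deg^+(D'-\nu)$ over all $D'\sim D$ and all $\nu\in\mathcal{N}$, and using that $\nu\mapsto K_G-\nu$ permutes $\mathcal{N}$ up to the choice of source). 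Combined with the fact that you explicitly leave the ``at least one'' direction unproved --- which is indeed where the real content of the theorem lies --- the proposal as written has a genuine gap: the key lemma is misstated, and the deduction of the theorem from it depends on the misstatement.
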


\noindent As a consequence, note that if $D-K_G$ is effective and nontrivial, then $\rk(D) = \deg(D) - g$. 

The main result of \cite{CoriLeBorgne} is a formula for the rank of a divisor on a complete graph.

\begin{theorem} \label{Thm:Rank} \cite[Theorem~12]{CoriLeBorgne}\footnote{Note that there is a typo in the formula appearing on page 3 of the published version of \cite{CoriLeBorgne}.  The correct formula for the rank can be found on page 19.}
Let $D$ be a sorted, $v_n$-reduced divisor on the complete graph $K_n$.  Let $q$ and $r$ be the unique integers such that $a_n + 1 = q(n-1) + r$ and $0 \leq r \leq n-2$.  We define $\chi (P)$ to be 1 if the proposition $P$ is true and 0 if it is false.  Then
\[
\rk (D) = \Big( \sum_{i=1}^{n-1} \max \{ 0, q - i + 1 + a_i + \chi (i \leq r)  \} \Big) - 1.
\]
\end{theorem}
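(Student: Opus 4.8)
The plan is to translate the rank into a combinatorial minimization over effective divisors, solve that minimization using the firing dynamics of the complete graph, and match the result to the stated sum. By the Baker--Norine definition of rank together with the theory of reduced divisors, $\rk(D) \geq s$ holds if and only if every effective divisor $E$ of degree $s$ has $D - E$ equivalent to an effective divisor, and this in turn is equivalent to the coefficient of $v_n$ in the $v_n$-reduced representative of $D - E$ being nonnegative. Consequently the first step is to record the clean reformulation
\[
\rk(D) + 1 = \min \{ \deg(E) \mid E \geq 0 \text{ and } D - E \text{ is not equivalent to an effective divisor} \},
\]
so that it suffices to identify the cheapest effective ``debt'' $E$ that forces the reduced coefficient of $v_n$ to become negative.

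Next I would analyze how subtracting debt and then passing to the $v_n$-reduced representative affects the coefficient of $v_n$. Since $D$ is already $v_n$-reduced, placing debt on a vertex $v_j$ with $j < n$ drives its coefficient below zero and triggers the borrowing steps of Dhar's algorithm; on the complete graph these borrowings cascade, and each completed round of borrowing removes chips from $v_n$ and distributes them uniformly to the other vertices. The key structural claim is that forcing the reduced coefficient of $v_n$ down to $-1$ requires $v_n$ to shed exactly $a_n + 1$ chips, and that, because of the $S_{n-1}$-symmetry of $K_n$ fixing $v_n$, these chips are redistributed among $v_1, \ldots, v_{n-1}$ as evenly as possible: writing $a_n + 1 = q(n-1) + r$ with $0 \leq r \leq n-2$, each vertex receives $q$ chips and, following the sorted order in which the fire reaches them, the first $r$ vertices receive one additional chip. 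This is exactly the point at which the hypothesis that $D$ is sorted enters, since it pins down \emph{which} vertices absorb the remainder $r$.

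With this redistribution in hand, the cost accounting becomes transparent. Because $D$ is a sorted parking divisor, the slack $c_i := (i-1) - a_i$ is nonnegative, and vertex $v_i$ can absorb up to $c_i$ of its allotted chips ``for free'' using the room under its parking cap; any excess must be supplied by genuine debt. Setting the allotment $b_i = q + \chi(i \leq r)$, the minimal total debt is therefore $\sum_{i=1}^{n-1} \max\{0, b_i - c_i\}$, which upon substituting $c_i = i - 1 - a_i$ equals $\sum_{i=1}^{n-1} \max\{0, q - i + 1 + a_i + \chi(i \leq r)\}$; subtracting $1$ as in the reformulation yields precisely the asserted formula. I would conclude by checking the degenerate regime, where every term vanishes, against the statement that $D$ is not equivalent to an effective divisor and hence has rank $-1$.

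The main obstacle I anticipate is establishing the structural claim of the second paragraph rigorously in both directions: that a debt of the balanced staircase shape \emph{does} force the reduced coefficient of $v_n$ to $-1$ (sufficiency), and that \emph{no} effective divisor of smaller degree can do so (necessity). The cascading nature of the borrowing on $K_n$ — where a single deficient vertex can set off a sequence of firings that alters all coordinates at once — makes it delicate to track the coefficient of $v_n$ through the reduction and to rule out cleverer, unbalanced debt placements. I expect the necessity half to be the harder one, and I would attack it with an exchange argument exploiting the vertex symmetry of the complete graph together with the monotonicity of Dhar's algorithm, showing that rebalancing any debt toward the canonical staircase never increases the degree needed to produce a negative $v_n$-coefficient.
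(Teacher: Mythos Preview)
This theorem is not proved in the present paper; it is quoted verbatim from Cori and Le~Borgne and used as a black box in the proof of Theorem~\ref{Thm:MainThm}. There is therefore no ``paper's own proof'' against which to compare your proposal.

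Assessing your sketch on its own merits: the reformulation in your first paragraph is standard and correct, and the final bookkeeping in the third paragraph is fine once the structural claim is granted. The real content lies entirely in the second paragraph, and there the argument is not yet a proof. The assertion that the $a_n+1$ chips leaving $v_n$ are redistributed ``as evenly as possible,'' with the \emph{first} $r$ vertices in the sorted order receiving the surplus, cannot be deduced from the $S_{n-1}$-symmetry of $K_n$ alone, because the divisor $D$ itself breaks that symmetry. One has to argue carefully why the sorted order, rather than some other selection of $r$ vertices, governs the remainder, and this is precisely where the hypothesis $a_1\le\cdots\le a_{n-1}$ does work that your sketch does not yet perform. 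Likewise, the interpretation of $c_i=(i-1)-a_i$ as ``room under the parking cap'' that can absorb incoming chips for free is suggestive but not an argument: you would need to show that any effective $E$ achieving a negative reduced $v_n$-coefficient can be converted, without increasing degree, into one supported on the staircase pattern. You correctly flag this necessity direction as the hard part, and the exchange/monotonicity strategy you propose is reasonable, but as written the proposal is an outline with the central lemma still to be supplied rather than a proof.
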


The goal of this paper is to compute the splitting type of divisors on the complete graph.  Note that the splitting type of a line bundle $\mathcal{L}$ is completely determined by the ranks of the line bundles $\mathcal{L} \otimes \mathcal{O}_{\mathbb{P}^2}(k)$ for all integers $k$.  More precisely, 
\begin{align*}
h^0 (C, \mathcal{L} \otimes \mathcal{O}_{\mathbb{P}^2}(k)) &= h^0 (\mathbb{P}^1, \pi_* \mathcal{L} \otimes \mathcal{O}_{\mathbb{P}^1} (k)) \\
&= h^0 (\mathbb{P}^1, \oplus_{i=1}^n \mathcal{O}_{\mathbb{P}^1} (e_i + k)) \\
&= \sum_{i=1}^n  h^0 (\mathbb{P}^1, \mathcal{O}_{\mathbb{P}^1} (e_i + k)) \\
&= \sum_{i=1}^n \max \{ 0 , e_i + k + 1 \}.
\end{align*}
For this reason, we define the splitting type of a divisor on $K_n$ as follows.

\begin{definition}
Let $e_1 , \ldots , e_n$ be integers.  A divisor $D$ on the complete graph $K_n$ has \emph{splitting type} $(e_1 , \ldots , e_n)$ if
\[
\rk (D + kL) = \Big( \sum_{i=1}^n \max \{ 0 , e_i + k + 1 \} \Big) - 1 \text{ for all } k \in \mathbb{Z} .
\]
\end{definition}

\section{Canonical Representatives of Divisors on Complete Graphs}
\label{Sec:Reps}

We now prove Proposition~\ref{Prop:Concentrated}.

%\begin{proposition}
%\label{Prop:Concentrated}
%Let $D$ be a divisor on the complete graph $K_n$.  Then there exists a unique integer $m$ and a permutation of the vertices such that $D - mL$ is equivalent to a concentrated divisor.
%\end{proposition}

\begin{proof}[Proof of Proposition~\ref{Prop:Concentrated}]
Let $D$ be a divisor on the complete graph $K_n$.  We provide an algorithm for producing a concentrated divisor equivalent to $D$.  First, by running Dhar's Burning Algorithm, we may reduce to the case where $D$ is $v_n$-reduced.  It follows that $\min \{ a_i \mid i \leq n-1 \} = 0$ and, for every $i \leq n-1$, we have $\# \{ j \leq n-1 \mid a_j \leq i-1 \} \geq i$.  Now, let $m$ be the maximum integer such that $a_n - mn \geq 0$, and let $D' = D - mnv_n$.  Note that $D' + mL$ is equivalent to $D$.  Our goal is to show that $D' + mL$ is concentrated.  Note that $D' + mL$ is concentrated if and only if $D'$ is, so it suffices to show that $D'$ is concentrated.  By construction, $0 \leq a_n \leq n-1$, hence $\# \{ j \mid a_j \leq n-1 \} = n$.  For $i \leq n-1$, we have 
\[
\# \{ j \mid a_j \leq i-1 \} \geq \# \{ j \leq n-1 \mid a_j \leq i-1 \} \geq i,
\]
hence $D'$ is concentrated.
\end{proof}

Note that the concentrated divisor equivalent to $D$ is not unique.  For example, the divisors $D = (n-1)v_n$ and $D' = v_1 + \cdots + v_{n-1}$ are equivalent, and both are concentrated.

\begin{example}
\label{Ex:Algorithm}
Figure~\ref{Fig:Algorithm} depicts the algorithm described in the proof of Proposition~\ref{Prop:Concentrated}.  In the upper left, we see the complete graph $K_5$, with its vertices labeled $v_1, \ldots, v_5$.  In the middle figure of the top row, we see a divisor $D$ on this graph.  This divisor is not concentrated, since $6-0 = 6 > 4$.  This divisor is also not $v_5$-reduced, because there are 6 chips on $v_4$.  Applying Dhar's Burning Algorithm, we first fire $v_4$ to obtain the divisor on the top right.  (Note that this divisor is concentrated, but the algorithm described in the proof of Proposition~\ref{Prop:Concentrated} has not yet terminated.)  This divisor is still not $v_5$-reduced, because there is at least 1 chip on every vertex.  We then fire the set $A = \{ v_1, v_2, v_3 , v_4 \}$ to obtain the divisor on the bottom left, which is $v_5$-reduced by Lemma~\ref{Lem:Reduced}.  This divisor has 6 chips at $v_5$, thus $m=1$ is the largest integer such that $6-5m \geq 0$.  The divisor $D - 1 \cdot L$ is equivalent to the divisor in the middle of the bottom row.  Finally, adding the divisor $L$ to this, we obtain the concentrated divisor on the bottom right.  Thus we see that $D$ is equivalent to a concentrated divisor. 

\begin{figure}[h]
%    \vspace{0.25cm}
%    \centering
    \begin{tikzpicture}
    \filldraw (0,1) circle (2pt);
    \filldraw (0.95,0.31) circle (2pt);
    \filldraw (-0.95,0.31) circle (2pt);
    \filldraw (0.59,-0.81) circle (2pt);
    \filldraw (-0.59,-0.81) circle (2pt);
    \draw (0,1)--(0.95,0.31);
    \draw (0,1)--(-0.95,0.31);
    \draw (0,1)--(0.59,-0.81);
    \draw (0,1)--(-0.59,-0.81);
    \draw (0.95,0.31)--(-0.95,0.31);
    \draw (0.95,0.31)--(0.59,-0.81);
    \draw (0.95,0.31)--(-0.59,-0.81);
    \draw (-0.95,0.31)--(0.59,-0.81);
    \draw (-0.95,0.31)--(-0.59,-0.81);
    \draw (0.59,-0.81)--(-0.59,-0.81);
    \node at (0,1.3) {$v_5$};
    \node at (1.2,0.31) {$v_1$};
    \node at (0.9,-1) {$v_2$};
    \node at (-0.9,-1) {$v_3$};
    \node at (-1.2,0.31) {$v_4$};

    \filldraw (3,1) circle (2pt);
    \filldraw (3.95,0.31) circle (2pt);
    \filldraw (2.05,0.31) circle (2pt);
    \filldraw (3.59,-0.81) circle (2pt);
    \filldraw (2.41,-0.81) circle (2pt);
    \draw (3,1)--(3.95,0.31);
    \draw (3,1)--(2.05,0.31);
    \draw (3,1)--(3.59,-0.81);
    \draw (3,1)--(2.41,-0.81);
    \draw (3.95,0.31)--(2.05,0.31);
    \draw (3.95,0.31)--(3.59,-0.81);
    \draw (3.95,0.31)--(2.41,-0.81);
    \draw (2.05,0.31)--(3.59,-0.81);
    \draw (2.05,0.31)--(2.41,-0.81);
    \draw (3.59,-0.81)--(2.41,-0.81);
    \node at (3,1.3) {$1$};
    \node at (4.2,0.31) {$0$};
    \node at (3.9,-1) {$2$};
    \node at (2.1,-1) {$0$};
    \node at (1.8,0.31) {$6$};

    \filldraw (6,1) circle (2pt);
    \filldraw (6.95,0.31) circle (2pt);
    \filldraw (5.05,0.31) circle (2pt);
    \filldraw (6.59,-0.81) circle (2pt);
    \filldraw (5.41,-0.81) circle (2pt);
    \draw (6,1)--(6.95,0.31);
    \draw (6,1)--(5.05,0.31);
    \draw (6,1)--(6.59,-0.81);
    \draw (6,1)--(5.41,-0.81);
    \draw (6.95,0.31)--(5.05,0.31);
    \draw (6.95,0.31)--(6.59,-0.81);
    \draw (6.95,0.31)--(5.41,-0.81);
    \draw (5.05,0.31)--(6.59,-0.81);
    \draw (5.05,0.31)--(5.41,-0.81);
    \draw (6.59,-0.81)--(5.41,-0.81);
    \node at (6,1.3) {$2$};
    \node at (7.2,0.31) {$1$};
    \node at (6.9,-1) {$3$};
    \node at (5.1,-1) {$1$};
    \node at (4.8,0.31) {$2$};

    \filldraw (0,-2) circle (2pt);
    \filldraw (0.95,-2.69) circle (2pt);
    \filldraw (-0.95,-2.69) circle (2pt);
    \filldraw (0.59,-3.81) circle (2pt);
    \filldraw (-0.59,-3.81) circle (2pt);
    \draw (0,-2)--(0.95,-2.69);
    \draw (0,-2)--(-0.95,-2.69);
    \draw (0,-2)--(0.59,-3.81);
    \draw (0,-2)--(-0.59,-3.81);
    \draw (0.95,-2.69)--(-0.95,-2.69);
    \draw (0.95,-2.69)--(0.59,-3.81);
    \draw (0.95,-2.69)--(-0.59,-3.81);
    \draw (-0.95,-2.69)--(0.59,-3.81);
    \draw (-0.95,-2.69)--(-0.59,-3.81);
    \draw (0.59,-3.81)--(-0.59,-3.81);
    \node at (0,-1.7) {$6$};
    \node at (1.2,-2.69) {$0$};
    \node at (0.9,-4) {$2$};
    \node at (-0.9,-4) {$0$};
    \node at (-1.2,-2.69) {$1$};

    \filldraw (3,-2) circle (2pt);
    \filldraw (3.95,-2.69) circle (2pt);
    \filldraw (2.05,-2.69) circle (2pt);
    \filldraw (3.59,-3.81) circle (2pt);
    \filldraw (2.41,-3.81) circle (2pt);
    \draw (3,-2)--(3.95,-2.69);
    \draw (3,-2)--(2.05,-2.69);
    \draw (3,-2)--(3.59,-3.81);
    \draw (3,-2)--(2.41,-3.81);
    \draw (3.95,-2.69)--(2.05,-2.69);
    \draw (3.95,-2.69)--(3.59,-3.81);
    \draw (3.95,-2.69)--(2.41,-3.81);
    \draw (2.05,-2.69)--(3.59,-3.81);
    \draw (2.05,-2.69)--(2.41,-3.81);
    \draw (3.59,-3.81)--(2.41,-3.81);
    \node at (3,-1.7) {$1$};
    \node at (4.2,-2.69) {$0$};
    \node at (3.9,-4) {$2$};
    \node at (2.1,-4) {$0$};
    \node at (1.8,-2.69) {$1$};

    \filldraw (6,-2) circle (2pt);
    \filldraw (6.95,-2.69) circle (2pt);
    \filldraw (5.05,-2.69) circle (2pt);
    \filldraw (6.59,-3.81) circle (2pt);
    \filldraw (5.41,-3.81) circle (2pt);
    \draw (6,-2)--(6.95,-2.69);
    \draw (6,-2)--(5.05,-2.69);
    \draw (6,-2)--(6.59,-3.81);
    \draw (6,-2)--(5.41,-3.81);
    \draw (6.95,-2.69)--(5.05,-2.69);
    \draw (6.95,-2.69)--(6.59,-3.81);
    \draw (6.95,-2.69)--(5.41,-3.81);
    \draw (5.05,-2.69)--(6.59,-3.81);
    \draw (5.05,-2.69)--(5.41,-3.81);
    \draw (6.59,-3.81)--(5.41,-3.81);
    \node at (6,-1.7) {$2$};
    \node at (7.2,-2.69) {$1$};
    \node at (6.9,-4) {$3$};
    \node at (5.1,-4) {$1$};
    \node at (4.8,-2.69) {$2$};

    \end{tikzpicture}
    \caption{Finding a concentrated divisor.}
    \label{Fig:Algorithm}
\end{figure}
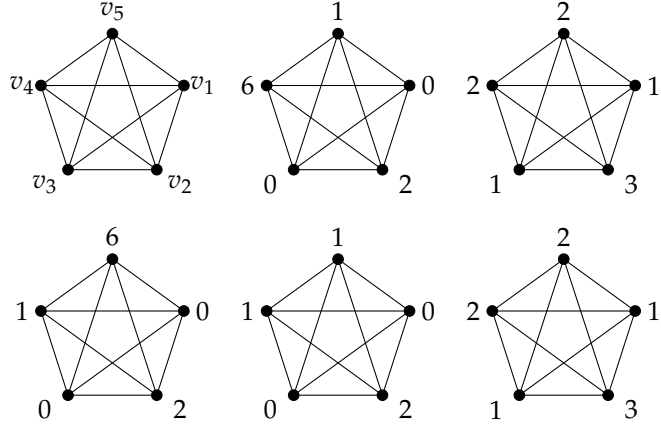

\end{example}

\section{Splitting Types of Divisors on Complete Graphs}
\label{Sec:Main}

We now prove the main theorem.  The proof is essentially a calculation using Theorem~\ref{Thm:Rank}, which is a bit lengthy due to the number of cases.

%\begin{theorem}
%\label{Thm:MainThm}
%Let $D = D' + mL$ where $D' = \sum_{i=1}^n a_i v_i$ is concentrated, and let $e_i = a_i - i + 1$.  Then $D$ has splitting type $(e_1 + m, \ldots , e_n + m)$.
%\end{theorem}

\begin{proof}[Proof of Theorem~\ref{Thm:MainThm}]
Note that, if $D$ has splitting type $(e_1 , \ldots , e_n)$, then $D + mL$ has splitting type $(e_1 + m, \ldots, e_n + m)$.  We may therefore further reduce to the case where $a_1 = 0$.  In this case, by Lemma~\ref{Lem:Reduced}, $D + knv_n$ is $v_n$-reduced for all integers $k$.  We prove, by induction on $k$, that 
\begin{align}
\label{eq:rank}
\rk (D + knv_n) = \Big( \sum_{i=1}^n \max \{ 0 , a_i - i + k + 2 \} \Big) - 1
\end{align}
for all integers $k$.  For the base case, suppose that $k \leq -1$.  Since $D + knv_n$ is $v_n$-reduced and has a negative number of chips at $v_n$, we see that $\rk (D + knv_n) = -1$.  On the other hand, since $a_i \leq i-1$ for all $i$, we have $a_i - i + k +2 \leq 0$ for all $i$, hence \eqref{eq:rank} holds.

For the inductive step, it suffices to prove that
\begin{align}
\label{eq:rankjump}
\rk (D + knv_n) = \rk(D + (k-1)nv_n) + \# \{ i \mid a_i - i +1  \geq -k \}.
\end{align}
The divisor $D + knv_n$ is sorted and $v_n$-reduced for all integers $k$, so we may apply Theorem~\ref{Thm:Rank}.  We let $q, q', r, r'$ be the unique integers such that
\begin{align*}
a_n + (k-1)n + 1 &= q(n-1) + r \\
a_n + kn + 1 &= q'(n-1) + r' \\
& 0 \leq r, r' \leq n-2 .
\end{align*}
By Theorem~\ref{Thm:Rank}, we have
\begin{align}
\label{eq:inductivestep}
\rk (D + knv_n) - \rk (D + (k-1)nv_n) &= \sum_{i=1}^{n-1} \Big( \max \{ 0, q' - i + 1 + a_i + \chi (i \leq r')  \} \\ 
&- \max \{ 0, q - i + 1 + a_i + \chi (i \leq r)  \} \Big). \notag
\end{align}
We break this into several cases.  

\textbf{Case 1:} First, consider the case where $0 \leq k \leq n - a_n - 3$.  Note that the value $n$ does not contribute to the sum \eqref{eq:rankjump}.  In this case, $q'=k$, $r = a_n + k$, $r' = r+1$, and $q' = q+1$.  Thus, \eqref{eq:inductivestep} becomes
\begin{align*}
\rk (D + knv_n) - \rk (D + (k-1)nv_n) &= \sum_{i=1}^{n-1} \Big( \max \{ 0, a_i - i + 1 + k + \chi (i \leq r+1)  \} \\ 
&- \max \{ 0, a_i - i + k + \chi (i \leq r)  \} \Big).
\end{align*}

If $i \leq r$, then
\begin{align*}
\max \{ 0, a_i - i + 1 + k + \chi (i \leq r+1)  \} - \max \{ 0, a_i - i + k + \chi (i \leq r)  \} \\
= \max \{ 0, a_i - i + 2 + k  \} - \max \{ 0, a_i - i + 1 + k  \} \\
= \begin{cases}
0 & \mbox{ if $a_i - i + 1 \leq -k - 1$} \\
1 & \mbox{ if $a_i - i + 1 \geq -k $.} 
\end{cases}
\end{align*}
Thus, the value $i$ contributes 1 to the sum \eqref{eq:inductivestep} if and only if $a_i - i + 1 \geq -k$, if and only if it contributes 1 to the sum \eqref{eq:rankjump}.

If $i \geq r+2$, then since $D$ is super sorted, we have $a_i - i + 1 \leq a_n - i + 1 \leq a_n - r - 1 = -k-1$.  Thus, the value $i$ does not contribute to the sum \eqref{eq:rankjump}.  Now, we have
\begin{align*}
\max \{ 0, a_i - i + 1 + k + \chi (i \leq r+1)  \} - \max \{ 0, a_i - i + k + \chi (i \leq r)  \} \\
= \max \{ 0, a_i - i + 1 + k  \} - \max \{ 0, a_i - i + k  \} \\
= \begin{cases}
0 & \mbox{ if $a_i - i + 1 \leq -k$} \\
1 & \mbox{ if $a_i - i + 1 \geq -k +1$.} 
\end{cases}
\end{align*}
Thus, the value $i$ does not contribute to the sum \eqref{eq:inductivestep}.

If $i = r+1$, then since $D$ is super sorted, we have $a_i - i + 1 = a_i - r \leq a_n - r = -k$.  Thus, the value $i$ contributes 1 to the sum \eqref{eq:rankjump} if and only if $a_i - i +1 = -k$.  Now, we have
\begin{align*}
\max \{ 0, a_i - i + 1 + k + \chi (i \leq r+1)  \} - \max \{ 0, a_i - i + k + \chi (i \leq r)  \} \\
= \max \{ 0, a_i - i + 2 + k  \} - \max \{ 0, a_i - i + k  \} \\
= \begin{cases}
0 & \mbox{ if $a_i - i + 1 \leq -k-1$} \\
1 & \mbox{ if $a_i - i + 1 = -k$}\\
2 & \mbox{ if $a_i - i + 1 \geq -k +1$.} 
\end{cases}
\end{align*}
Thus, the value $i$ contributes 1 to the sum \eqref{eq:inductivestep} if and only if $a_i - i + 1 = -k$.  Putting this all together, we see that \eqref{eq:rankjump} holds in this case.

\textbf{Case 2:}  Next, consider the case where $k = n - a_n - 2 \geq 0$.  As before, the value $n$ does not contribute to the sum \eqref{eq:rankjump}.  In this case, $q = k-1$, $q' = k+1$, $r = n-2$, and $r' = 0$.  Thus, \eqref{eq:inductivestep} becomes
\begin{align*}
\label{eq:inductivestep}
\rk (D + knv_n) - \rk (D + (k-1)nv_n) &= \sum_{i=1}^{n-1} \max \{ 0, k - i + 2 + a_i  \} \\ 
&- \sum_{i=1}^{n-2} \Big( \max \{ 0, a_i - i + 1 + k  \} \Big) - \max \{ 0, a_{n-1} - n + 1 + k \}.
\end{align*}
If $i \leq n-2$, we have
\begin{align*}
 \max \{ 0, a_i - i + 2 + k  \} - \max \{ 0, a_i - i + 1 + k  \} \\
=  \begin{cases}
0 & \mbox{ if $a_i - i + 1 \leq -k-1$} \\
1 & \mbox{ if $a_i - i + 1 \geq -k$.}
\end{cases}
\end{align*}
Thus, the value $i$ contriubtes 1 to the sum \eqref{eq:inductivestep} if and only if $a_i - i + 1 \geq -k$, if and only if it contributes 1 to the sum \eqref{eq:rankjump}.

If $i = n-1$, then since $D$ is super sorted, we have $a_{n-1} - (n-1) + 1 \leq a_n - n + 2 = -k$.  We have
\begin{align*}
 \max \{ 0, a_{n-1} - (n-1) + 2 + k  \} - \max \{ 0, a_{n-1} - n + 1 + k  \} \\
=  \begin{cases}
0 & \mbox{ if $a_{n-1} - (n-1) + 1 \leq -k-1$} \\
1 & \mbox{ if $a_{n-1} - (n-1) + 1 = -k$} \\
2 & \mbox{ if $a_{n-1} - (n-1) + 1 \geq -k+1$.}
\end{cases}
\end{align*}
Thus, the value $i = n-1$ contributes 1 to the sum \eqref{eq:inductivestep} if and only if $a_{n-1} - (n-1) + 1 = -k$, if and only if it contributes 1 to the sum \eqref{eq:rankjump}.  Putting this all together, we see that \eqref{eq:rankjump} holds in this case.

\textbf{Case 3:}  Next, consider the case where $0 < n - a_n - 1 \leq k \leq n-3$.  Here, the value $n$ contributes 1 to the sum \eqref{eq:rankjump}.  In this case, $q'=k+1$, $r = a_n + k - (n-1)$, $r' = r+1$, and $q' = q+1$.  Thus, \eqref{eq:inductivestep} becomes
\begin{align*}
\rk (D + knv_n) - \rk (D + (k-1)nv_n) &= \sum_{i=1}^{n-1} \Big( \max \{ 0, a_i - i + 2 + k + \chi (i \leq r+1)  \} \\ 
&- \max \{ 0, a_i - i + 1 + k + \chi (i \leq r)  \} \Big).
\end{align*}

If $i \leq r$, then $a_i - i + 1 \geq -i + 1 \geq -r+1 = n-k-a_n \geq -k+1$.  Thus, $i$ contributes 1 to the sum \eqref{eq:rankjump}.  We have 
\begin{align*}
\max \{ 0, a_i - i + 2 + k + \chi (i \leq r+1)  \} - \max \{ 0, a_i - i + 1 + k + \chi (i \leq r)  \} \\
= \max \{ 0, a_i - i + 3 + k  \} - \max \{ 0, a_i - i + 2 + k  \} \\
= \begin{cases}
0 & \mbox{ if $a_i - i + 1 \leq -k - 2$} \\
1 & \mbox{ if $a_i - i + 1 \geq -k-1 $.} 
\end{cases}
\end{align*}
Thus, the value $i$ contributes 1 to the sum \eqref{eq:inductivestep} as well.

If $i \geq r+2$, then we have
\begin{align*}
\max \{ 0, a_i - i + 2 + k + \chi (i \leq r+1)  \} - \max \{ 0, a_i - i + 1 + k + \chi (i \leq r)  \} \\
= \max \{ 0, a_i - i + 2 + k  \} - \max \{ 0, a_i - i + 1 + k  \} \\
= \begin{cases}
0 & \mbox{ if $a_i - i + 1 \leq -k - 1$} \\
1 & \mbox{ if $a_i - i + 1 \geq -k$.} 
\end{cases}
\end{align*}
Thus, the value $i$ contributes to the sum \eqref{eq:inductivestep} if and only if $a_i - i + 1 \geq -k$, if and only if it contributes 1 to the sum \eqref{eq:rankjump}.

If $i = r+1$, then $a_i - i + 1 = a_i - r \geq -r = n-k-a_n - 1 \geq -k$.  Thus, $i$ contributes 1 to the sum \eqref{eq:rankjump}.  We have
\begin{align*}
\max \{ 0, a_i - i + 2 + k + \chi (i \leq r+1)  \} - \max \{ 0, a_i - i + 1 + k + \chi (i \leq r)  \} \\
= \max \{ 0, a_i - i + 3 + k  \} - \max \{ 0, a_i - i + 1 + k  \} \\
= \begin{cases}
0 & \mbox{ if $a_i - i + 1 \leq -k - 2$} \\
1 & \mbox{ if $a_i - i + 1 = -k-1$} \\
2 & \mbox{ if $a_i - i + 1 \geq - k$.} 
\end{cases}
\end{align*}
Thus, $i$ contributes 2 to the sum \eqref{eq:inductivestep}.  Since both $i$ and $n$ contribute 1 to the sum \eqref{eq:rankjump}, we see that \eqref{eq:rankjump} holds in this case.

\textbf{Case 4:}  Finally, consider the case where $k \geq n-2$.  Note that the canonical divisor $K$ is equal to $(n-3)L$.  Since $D$ is effective, we see that $D + kL - K$ is effective and nontrivial.  It follows from Riemann-Roch that 
\begin{align*}
\rk (D + knv_n) = kn - {{n-1}\choose{2}} +  \sum_{i=1}^n a_i &= \Big( \sum_{i=1}^n (a_i - i + k + 2) \Big) -1  \\
&=  \Big( \sum_{i=1}^n \max \{ 0 , a_i - i + k + 2 \} \Big) - 1.
\end{align*}
\end{proof}

\begin{example}
Consider the divisor $D$ pictured on the left in Figure~\ref{Fig:Splitting}.  In Example~\ref{Ex:Algorithm}, we showed that $D$ is equivalent to the concentrated divisor $D'$ pictured on the right in Figure~\ref{Fig:Splitting}.  By Theorem~\ref{Thm:MainThm}, we have
\begin{align*}
e_1 &= 1 -1 + 1 = 1 \\
e_2 &= 1 -2 + 1 = 0 \\
e_3 &= 2 - 3 + 1 = 0 \\
e_4 &= 2 - 4 + 1 = -1 \\
e_5 &= 3 - 5 + 1 = -1. 
\end{align*}
Thus, the splitting type of the divisor $D$ is $(1,0,0,-1,-1)$.  Since all of the terms in the splitting type are greater than $-2$, the divisor $D$ is nonspecial.  In other words, the rank of $D$ is 3, which is equal to that of a general divisor of degree 9 on a plane quintic.  The splitting type of $D$, however, is not equal to that of a general divisor of degree 9 on a plane quintic, which is $(0,0,0,0,-1)$.  This can be seen from the fact that $D-L$ is effective, which is false for a general divisor of degree 9.

\begin{figure}[h]
%    \vspace{0.25cm}
%    \centering
    \begin{tikzpicture}
    \filldraw (0,1) circle (2pt);
    \filldraw (0.95,0.31) circle (2pt);
    \filldraw (-0.95,0.31) circle (2pt);
    \filldraw (0.59,-0.81) circle (2pt);
    \filldraw (-0.59,-0.81) circle (2pt);
    \draw (0,1)--(0.95,0.31);
    \draw (0,1)--(-0.95,0.31);
    \draw (0,1)--(0.59,-0.81);
    \draw (0,1)--(-0.59,-0.81);
    \draw (0.95,0.31)--(-0.95,0.31);
    \draw (0.95,0.31)--(0.59,-0.81);
    \draw (0.95,0.31)--(-0.59,-0.81);
    \draw (-0.95,0.31)--(0.59,-0.81);
    \draw (-0.95,0.31)--(-0.59,-0.81);
    \draw (0.59,-0.81)--(-0.59,-0.81);
    \node at (0,1.3) {$1$};
    \node at (1.2,0.31) {$0$};
    \node at (0.9,-1) {$2$};
    \node at (-0.9,-1) {$0$};
    \node at (-1.2,0.31) {$6$};

    \filldraw (5,1) circle (2pt);
    \filldraw (5.95,0.31) circle (2pt);
    \filldraw (4.05,0.31) circle (2pt);
    \filldraw (5.59,-0.81) circle (2pt);
    \filldraw (4.41,-0.81) circle (2pt);
    \draw (5,1)--(5.95,0.31);
    \draw (5,1)--(4.05,0.31);
    \draw (5,1)--(5.59,-0.81);
    \draw (5,1)--(4.41,-0.81);
    \draw (5.95,0.31)--(4.05,0.31);
    \draw (5.95,0.31)--(5.59,-0.81);
    \draw (5.95,0.31)--(4.41,-0.81);
    \draw (4.05,0.31)--(5.59,-0.81);
    \draw (4.05,0.31)--(4.41,-0.81);
    \draw (5.59,-0.81)--(4.41,-0.81);
    \node at (5,1.3) {$2$};
    \node at (6.2,0.31) {$1$};
    \node at (5.9,-1) {$3$};
    \node at (4.1,-1) {$1$};
    \node at (3.8,0.31) {$2$};

    \end{tikzpicture}
    \caption{A divisor $D$ (left) and its associated concentrated divisor $D'$ (right).}
    \label{Fig:Splitting}
\end{figure}
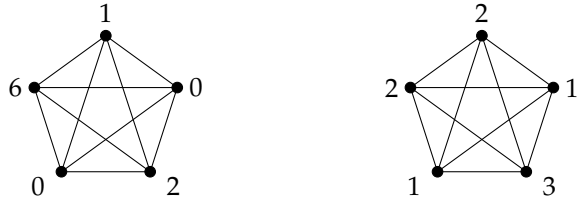
\end{example}

Finally, we prove Corollary~\ref{Cor:PossibleSplitting}.

\begin{proof}[Proof of Corollary~\ref{Cor:PossibleSplitting}]
First, let $D = \sum_{i=1}^n a_i v_i$ be a divisor on the complete graph, and let $(e_1, \ldots , e_n)$ be its splitting type.  We show that, when arranged in decreasing order $e_1 \geq \cdots \geq e_n$, we have $e_i \leq e_{i+1} + 1$ for all $i$.  By Proposition~\ref{Prop:Concentrated}, we may assume that $D$ is concentrated, and by permuting the vertices, we may assume that $D$ is super sorted.  Then $e_i = a_i - i + 1$ for all $i$.  Since $D$ is concentrated and super sorted, $e_i \leq e_1$ for all $i$.  Because the sequence $(e_1, \ldots, e_n)$ obtains its maximum at $e_1$, it suffices to show that if $e_i < e_{i-1}$, then $e_{i-1} = e_i + 1$.  But since $a_i \geq a_{i-1}$, if $e_i < e_{i-1}$, we see that $a_i = a_{i-1}$ and $e_{i-1} = e_i + 1$.  

For the converse, let $(e_1, \ldots , e_n)$ be a splitting type with $e_i \geq e_{i+1} \geq e_i - 1$ for all $i$.  Now, let $a_i = e_i + i - 1$ for all $i$, and let $D = \sum_{i=1}^n a_i v_i$.  By assumption, we have $e_{i+1} \geq e_i -1$ for all $i$, so $a_{i+1} \geq a_i$ for all $i$, hence $D$ is super sorted.  Also by assumption, we have $e_i \leq e_1$ for all $i$, so $a_i - a_1 \leq i-1$ for all $i$, hence $D$ is concentrated.  By Theorem~\ref{Thm:MainThm}, it follows that the splitting type of $D$ is $(e_1, \ldots , e_n)$.
\end{proof}

\bibliography{math}

\end{document}